\newtheorem{assumption}{\bf Assumption}[section]
\newtheorem{lemma}{\bf Lemma}[section]
\newtheorem{proposition}{\bf Proposition}[section]
\newtheorem{theorem}{\bf Theorem}[section]
\newtheorem{remark}{\bf Remark}[section]
\newcommand{\pair}[2]{\mbox{$\left\langle #1,#2 \right\rangle$}}
\renewcommand{\labelenumi}
\title{Convergence analysis of variants of 
the averaged alternating modified reflections method
\thanks{This work  was supported in part by the Ministry of Education, Culture, Sports, Science, and Technology [grant number 16K05280] }}
\author{Shin-ya Matsushita\thanks{Department of Electronics and Information Systems, 
Akita Prefectural University, 84-4 Yuri-Honjo, Akita, Japan 
({\tt matsushita@akita-pu.ac.jp})} }
\begin{document}

\maketitle 

\begin{abstract}
This paper presents new variants of the averaged alternating modified reflections (AAMR) 
method for the best approximation problem. 
Under a mild constraint qualification, we first show 
its weak convergence and then establish a convergence rate. 
Furthermore, under a standard interior-point-like condition, we show 
that the method has a finite termination property. 
\end{abstract}


\noindent {\bf Keywords:} averaged alternating modified reflections method, 
best approximation problem, weak convergence, rate of convergence, finite termination, Hilbert space\\

\noindent {\bf MSC2010: }  47H09, 47J25, 47N10, 90C25 

\pagestyle{plain}
\thispagestyle{plain}

\section{Introduction}
Let $A$ and $B$ be closed convex subsets  of a real Hilbert space $H$. 
We consider the problem of finding the closest point from a given point $x_{0}$ in $H$ to $A\cap B$, i.e.,
\begin{equation}
\label{BAP}
\mbox{minimize}\quad \Vert u-x_{0}\Vert~~\mbox{subject to}~~ u\in A\cap B. 
\end{equation}
Problem (\ref{BAP}) is called the 
best approximation problem with respect to $A\cap B$ 
and this problem is of considerable importance in
data analysis and modeling, control system design and signal processing 
\cite{Skelton-Iwasaki-Grigoriadis,Ben-Nemirovski,Bauschke-Combettes,Deutsch,Escalante-Raydan,Tanaka-Nakata,Tong-Guo-Tong-Xi-Yu}. 
In the case when $A$ is the set of $N\times N$ 
symmetric positive semidefinite matrices $\mathcal{S}^{N}_{+}$ 
and $B$  is an appropriate subset of the set of $N\times N$ 
symmetric matrices $\mathcal{S}^{N}$ respectively, several type matrix approximation problems 
can be described as (\ref{BAP}) on space $\mathcal{S}^{N}$ 
(see, for instance, patterned covariance matrix problems \cite[Chapter 6]{Escalante-Raydan}, 
controller design problems \cite[Chapter 10]{Skelton-Iwasaki-Grigoriadis} and 
well-conditioned positive definite matrix approximation problems \cite{Tanaka-Nakata,Tong-Guo-Tong-Xi-Yu}). 


The method discussed in this paper is the {\it averaged alternating modified reflections} (AAMR) method. 
The AAMR method was introduced by Arag\'on Artacho and Campoy  \cite{Aragon-Campoy} 
to solve the best approximation problem with respect to  convex feasibility problems. 
The framework of the method for closed convex sets $A$ and $B$ is as follows: Given $x_0\in H$ and $q\in H$,
\begin{equation}
\label{AAMR1}
x_{n+1}=T_{A-q,B-q,\alpha,\beta}(x_{n}), ~~n=0,1,2,\dots, 
\end{equation}
where $\alpha, \beta\in (0,1)$, $T_{A-q,B-q,\alpha,\beta}:H\rightarrow H$ is the averaged alternating modified 
reflections operator defined by 
$T_{A-q,B-q,\alpha,\beta}=(1-\alpha)I+\alpha(2\beta P_{B-q}-I)(2\beta P_{A-q}-I)$, 
$I$ denotes the identity mapping, $C+p$ denotes a set $C$ shifted by a point $p$, i.e., 
$C+p=\{c+p:c\in C\}$ and $P_{C}$ denotes the metric projection onto $C$. 
If $A\cap B\neq\emptyset$, under the constraint qualification 
\begin{equation}
\label{CQ1}
 q-P_{A\cap B}(q)\in (N_{A}+N_{B})(P_{A\cap B}(q)), 
\end{equation}
where $N_{A}$ and $N_{B}$ denote the normal cones to the sets $A$ and $B$, respectively, 
Arag\'on Artacho and Campoy \cite[Theorem 4.1]{Aragon-Campoy} showed that 
the sequence generated by (\ref{AAMR1}) weakly converges to a point $x^{*}\in H$, such that  
\begin{equation}
\label{BAP2}
P_{A}(x^{*}+q)=P_{A\cap B}(q).
\end{equation}
That is, $P_{A}(x^{*}+q)$ solves the problem (1) when $q=x_{0}$.

Assume that $R_{A-q,B-q,\beta}=(2\beta P_{B-q}-I)(2\beta P_{A-q}-I)$ in order
to simplify the notation. 
Since $R_{A-q,B-q,\beta}$ is nonexpansive (see \cite[Proposition 3.3]{Aragon-Campoy}), 
(\ref{AAMR1}) can be viewed as the Krasnosel'ski\u\i-Mann 
fixed point iteration with respect to $R_{A-q,B-q,\beta}$, and 
this method generates weakly 
convergent iteration sequences (see, e.g., \cite[Subchapter 5.2]{Bauschke-Combettes}). 
Moreover, the weak cluster points of these weakly 
convergent iteration sequences only solve the following fixed point equation $R_{A-q,B-q,\beta}(u)=u$. 
However, it is not guaranteed whether the weak cluster points solve the best approximation problem.

The goal of this paper is three-fold. First, we show an enhanced weak convergence result for a variant of 
the AAMR method. 
Second, we establish its convergence  rate. 
The third purpose is to analyze the finite termination property. 

To describe our goal more concretely, 
we introduce the following variant of the AAMR method  for solving (\ref{BAP}):
\begin{equation}
\label{AAMR2}
y_{n}=P_{A}(x_{n}+q), ~~n=0,1,2,\dots, 
\end{equation}
where $\{x_{n}\}$ is the sequence generated by (\ref{AAMR1}). 
As we have mentioned, sequences generated by the AAMR method 
(\ref{AAMR1}) are weakly convergent. But 
it is not clear whether the sequence $\{y_n\}$ generated by (\ref{AAMR2}) weakly 
converges to $P_{A}(x^{*}+q)$  since $P_{A}$ is in general not sequentially weakly continuous \cite{Zarantonello}. 
By using the demiclosedness principle in \cite{Bauschke}, 
we show that 
$\{y_{n}\}$ weakly converges to $P_{A}(x^{*}+q)$, without any other restrictions. 

Our second purpose is to analyze the convergence rate for (\ref{AAMR2}). 
To establish the convergence rate, 
we thus will use the following residual function 
\begin{equation}
\label{Estimate}
r(x)=\Vert P_{A}(x+q)-P_{B}(P_{A}(x+q))\Vert
\end{equation}
as a measure of the convergence rate. 
Clearly, if $r(x_{n})=0$ then 
$y_{n}=P_{B}(y_{n})$, so $y_{n}$ is in $A\cap B$ because $y_{n}$ is in $A$ for all $n\in \mathbb{N}$. 
On the other hand, if $r(x_{n})$ is  large, 
then $y_{n}$ is to be far away from the set $B$. 
Therefore, the quantity $r(x_{n})$ can be viewed as a measure of 
the distance between the iteration $y_{n}$ and the set $B$. 
Recently, a comprehensive convergence rate 
analysis for operator splitting methods was studied in \cite{Deng-Lai-Peng-Yin}. 
Using a useful technique established in \cite[Lemma 2.1]{Deng-Lai-Peng-Yin}, we show that $r(x_{n})=o\left(\frac{1}{\sqrt{n}}\right)$, where the notation $o$ means 
that $s_{n}=o\left(\frac{1}{t_{n}}\right)$ if and only if $\lim_{n\rightarrow\infty}s_{n}t_{n}=0$.

Our third purpose is to analyze the finite termination property of a variant of (\ref{AAMR2}). 
Recently, under a standard interior-point-like condition ($A\cap{\rm int}B\neq\emptyset$),  
finite termination of projection-type iterative methods was studied in \cite{Rami-Helmke-Moore,B-D-N-P,M-X}. 
Using the techniques developed in \cite{Rami-Helmke-Moore,M-X}, we show that a variant of (\ref{AAMR2}) terminates 
finitely to a point in $A\cap {\rm int}B$.

The rest of this paper is organized as follows. In section 2, some preliminaries are 
presented. In section 3, we discuss the weak convergence of (\ref{AAMR2}). 
Then, we discuss the convergence rate of (\ref{AAMR2}) in section 4.  
Moreover, we investigate the finite termination in section 5. 
Finally, we make some conclusions in section 6.

\section{Basic definitions and preliminaries}
The following notations will be used in this paper: 
$\mathbb{R}$ denotes the set of real numbers; 
$\mathbb{N}$ denotes the set of nonnegative integers; 
$H$ denotes a real Hilbert space; for any $x,y\in H$, $\pair{x}{y}$ denotes the inner product of $x$ and $y$; 
for any $z\in H$, $\Vert z\Vert$ denotes the norm of $z$, i.e., $\Vert z\Vert=\sqrt{\pair{z}{z}}$; for any $\{x_{n}\}\subset H$,  $x_{n}\rightharpoonup x$ 
denotes weak convergence, i.e., $\pair{x_{n}}{x^{*}}\rightarrow\pair{x}{x^{*}}~(n\rightarrow\infty)~(\forall x^{*}\in H)$; 
for any $w\in H$ and $A\subset H$, $A+w$ denotes $A$ shifted by $w$, i.e., $A+w=\{a+w:a\in A\}$;  
for any $r>0$, $B(x,r)$ denotes a closed ball 
with center $x$ and radius $r$, i.e., $B(x,r)=\{v\in H:\Vert x-v\Vert \le r\}$;  
\mbox{int}$A$ denotes the interior of set $A$; $A^{c}$ denotes the complement of $A$; 
for any $A,B\subset H$, $\mbox{\rm dist}(A,B)$ denotes the distance between two sets $A$ and $B$, i.e.,
$ \mbox{\rm dist}(A,B)=\inf\{\Vert x-y\Vert:x\in A, y\in B\}$;  
for any $C\subset H$ and mapping $U:C\rightarrow C$, $\mbox{Fix}(U)$ 
denotes the fixed point set of $U$, i.e., $\mbox{Fix}(U)=\{x\in C: U(x)=x\}$. 

Let $C$ be a closed and convex subset of $H$. A mapping $U:C\rightarrow H$ is said to be 
\begin{itemize}
\item[{\rm (i)}] {\it firmly nonexpansive} if 
\[
\Vert U(x)-U(y)\Vert^{2}\le \pair{x-y}{U(x)-U(y)} ~~(x,y\in C);
\]
\item[{\rm (ii)}] {\it nonexpansive} if 
\[
 \Vert U(x)-U(y)\Vert\le \Vert x-y\Vert ~~(x,y\in C);
\]
\item[{\rm (iii)}] {\it $\alpha$-averaged} for $\alpha\in (0,1)$, if there exists a nonexpansive mapping $R : C\rightarrow H$ such that
\[
 U=(1-\alpha) I+\alpha R.
\]
\end{itemize}

The {\it metric projection} of a point $x\in H$ onto $C$, denoted by $P_{C}(x)$, is defined as 
a unique solution to problem
\[
\mbox{minimize}~~\Vert x-y\Vert~~\mbox{subject to}~~y\in C.
\]
We know that $P_{C}$ is (firmly) nonexpansive and  
 satisfies $P_{x+C}(y)=P_{C}(y-x)+x$  for all $x,y\in H$. 
See \cite{Bauschke-Combettes}, 
\cite{Deutsch}, \cite{Takahashi} and \cite{Escalante-Raydan} for further information on 
metric projections.  The {\it normal cone} to $C$ at $x$ is defined by 
\[
 N_{C}(x)=\{v\in H : \pair{v}{y-x}\le 0~~\mbox{for all}~~y\in C\}. 
\]

Let $A$ and $B$ be nonempty, closed and convex subsets of $H$. Given $\alpha,\beta\in (0,1)$, we 
define the {\it averaged alternating modified reflections} (AAMR) operator  $T_{A,B,\alpha,\beta} : H\rightarrow H$ 
as 
\[
T_{A,B,\alpha,\beta}=(1-\alpha)I+\alpha(2\beta P_{B}-I)(2\beta P_{A}-I). 
\]
Assume that $R_{A,B,\beta}=(2\beta P_{B}-I)(2\beta P_{A}-I)$. 
We list the following useful properties of $T_{A,B,\alpha,\beta}$ and $R_{A,B,\beta}$:
\begin{itemize}
\item[{\rm (1)}] $(2\beta P_{A}-I)$ (resp. $(2\beta P_{B}-I)$) is nonexpansive and $T_{A,B,\alpha,\beta}$ is $\alpha$-averaged; 
\item[{\rm (2)}] For any $q\in H$,
\begin{enumerate}
\item[{\rm (a)}] ${\rm Fix} (T_{A-q,B-q,\alpha,\beta})= {\rm Fix}(R_{A-q,B-q,\beta})$;
\item[{\rm (b)}] ${\rm Fix}(T_{A-q,B-q,\alpha,\beta})\neq\emptyset$ if and only if  $A\cap B\neq\emptyset$ and 
$q$ satisfies (\ref{CQ1}).
\end{enumerate}
\end{itemize}
See \cite[Sections 3 and 4]{Aragon-Campoy} for more details. 

Let $C$ and $D$ be two closed and convex subsets of $H$. 
The condition (\ref{CQ1})  is important to 
guarantee the existence of fixed points of $T_{A-q,B-q,\alpha,\beta}$. 
The following notion is closely related to (\ref{CQ1}). 
The pair of sets $\{C,D\}$ is said to have the 
{\it strong 
conical hull intersection property} (strong CHIP) at $x\in C\cap D$ if
$N_{C\cap D}(x) = N_{C}(x) + N_{D}(x)$. We say $\{C,D\}$ has the strong CHIP 
if it has the strong CHIP at each $x\in C\cap D$. 
In particular, it was shown in \cite[Proposition 4.1]{Aragon-Campoy} that, for all $q\in H$, 
$q$ satisfies (\ref{CQ1}) if and only if $\{A,B\}$ has the strong CHIP. 
A well-known sufficient condition for the strong CHIP 
is the following standard interior-point-like condition, $A\cap {\rm int}B\neq\emptyset$. 
For more general sufficient conditions for the strong CHIP, see \cite{Deutsch,Burachik-Jeyakumar}. 
The condition $A\cap{\rm int}B\neq\emptyset$ and the following result will be useful in Section 5.
\begin{lemma}
\label{Lem2}
Let $A$  and  $B$ be nonempty sets in $H$. 
If $A\cap {\rm int}B\neq\emptyset$, then for any $e\in H$, there 
exists $\gamma>0$ such that $A\cap {\rm int}(B+\gamma e)\neq\emptyset$. 
\end{lemma}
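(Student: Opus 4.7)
The strategy is to pick any $x\in A\cap{\rm int}\,B$ and show that, provided $\gamma>0$ is small enough, the same point $x$ already lies in $A\cap{\rm int}(B+\gamma e)$. In essence, the lemma will be a quantitative expression of the openness of ${\rm int}\,B$ combined with the fact that translations of $H$ are homeomorphisms.

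First, I would fix $x\in A\cap{\rm int}\,B$, which is nonempty by hypothesis, and choose $r>0$ such that the closed ball $B(x,r)\subset B$; such an $r$ exists by the definition of interior. The degenerate case $e=0$ is immediate, since then $B+\gamma e=B$ and hence $x\in A\cap{\rm int}(B+\gamma e)$ for every $\gamma>0$. From this point on I would assume $e\neq 0$.

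Next, I would pick any $\gamma\in(0,r/\Vert e\Vert)$ and verify that $x\in{\rm int}(B+\gamma e)$. Because translation is a homeomorphism of $H$, one has ${\rm int}(B+\gamma e)={\rm int}(B)+\gamma e$, so the membership reduces to checking $x-\gamma e\in{\rm int}\,B$. A triangle-inequality estimate then shows $B(x-\gamma e,\,r-\gamma\Vert e\Vert)\subset B(x,r)\subset B$, which places $x-\gamma e$ in ${\rm int}\,B$. Combined with $x\in A$, this yields $x\in A\cap{\rm int}(B+\gamma e)$, as required, with the explicit choice $\gamma=r/(2\Vert e\Vert)$ being perfectly serviceable.

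I do not expect any genuine obstacle: the proof reduces to the openness of ${\rm int}\,B$ and continuity of translation, which is why the result can be phrased for arbitrary nonempty sets without any convexity or closedness hypothesis. The only point demanding a little care is to separate the trivial case $e=0$, in which any $\gamma>0$ works, from the case $e\neq 0$, where the explicit threshold $\gamma<r/\Vert e\Vert$ is dictated by the radius of a ball inscribed in $B$ around the chosen $x$.
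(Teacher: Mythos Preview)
Your argument is correct and follows essentially the same route as the paper: pick $x\in A\cap{\rm int}\,B$, take a ball $B(x,r)\subset B$, and show that for $\gamma$ small (namely $\gamma\Vert e\Vert<r$) one has $x-\gamma e\in{\rm int}\,B$, whence $x\in A\cap{\rm int}(B+\gamma e)$. The only cosmetic differences are that the paper uses just the inclusion ${\rm int}\,B+\gamma e\subset{\rm int}(B+\gamma e)$ rather than the full equality from the homeomorphism, and does not separate out the case $e=0$; your treatment is slightly more careful on both counts.
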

\begin{proof}
 Let $u\in A\cap {\rm int}B$. Then, there exists $r> 0$ such that 
$B(u,r)\subset B$. 
We can choose sufficiently small $\gamma>0$ 
to make the following holds;
\[
 \Vert u-(u-\gamma e)\Vert=\gamma\Vert e\Vert\le r.
\]
This implies that $u-\gamma e\in B(u,r)\subset B$ and hence 
$u-\gamma e\in {\rm int}B$. 
Since $({\rm int}B+\gamma e)\subset {\rm int}(B+\gamma e)$ (see, e.g., \cite{Tanaka-Kuroiwa}), 
we can therefore conclude that 
$u\in A\cap({\rm int}B+\gamma e)\subset A\cap {\rm int}(B+\gamma e)$.
\end{proof}

\section{Weak convergence result}

This section shows the weak convergence of 
the modification of the AAMR method. 

We consider the following iterative method. 
Choose $x_{0}, q\in H$ and  $\alpha,\beta\in (0,1)$ and  
consider the iterative scheme
\begin{equation}
\label{AAMR3}
\left \{
\begin{array}{l}
y_{n}=P_{A}(x_{n}+q)\\
x_{n+1}=T_{A-q,B-q,\alpha,\beta}(x_{n}),~~n=0,1,2,\dots.
\end{array}
\right.
\end{equation}

Before we proceed with the convergence analysis of (\ref{AAMR3}), 
we introduce the following result.  

\begin{proposition}{\rm \cite[Theorem 2.10]{Bauschke}}
\label{Prop1}
 Set $I = \{1, 2,\dots, m\}$, where $m$ is an integer 
greater than or equal to $2$. Let $\{F_{i}\}_{i\in I}$ be a 
family of firmly nonexpansive mappings on $H$, 
and let, for each $i\in I$, $\{z_{i,n}\}$ be a sequence in $H$ 
such that for all $i, j\in I$, 
\begin{align*}
&z_{i,n}\rightharpoonup z_{i}~ \mbox{and}~F_{i}z_{i,n}\rightharpoonup  x,\\
&\sum_{i\in I}(z_{i,n}-F_{i}(z_{i,n}))\rightarrow -mx+\sum_{i\in I}z_{i},\\
&F_{i}(z_{i,n})-F_{j}(z_{j,n})\rightarrow0. 
\end{align*}
Then $F_{i}(z_{i})=x,$ for every $i\in I$. 
\end{proposition}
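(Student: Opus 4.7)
My strategy is to recast the conclusion in the language of maximal monotone operators, and then extract from the hypotheses a single summed monotonicity-style inequality that pins $x$ down as the resolvent value of each $F_i$ at $z_i$. Writing $A_i := F_i^{-1} - I$, the firm nonexpansiveness of $F_i$ makes $A_i$ maximal monotone with $F_i = (I + A_i)^{-1}$, and the desired conclusion $F_i z_i = x$ is equivalent to $z_i - x \in A_i x$.

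First I would use firm nonexpansiveness of $F_i$ in the rearranged form
\begin{equation*}
 \pair{(z_{i,n} - F_i z_{i,n}) - (y_i - F_i y_i)}{F_i z_{i,n} - F_i y_i} \geq 0,
\end{equation*}
valid for every $n$ and every auxiliary point $y_i \in H$. Setting $u_{i,n} := z_{i,n} - F_i z_{i,n}$ and summing over $i \in I$ produces four families of inner-product terms. Three of the four pose no difficulty in the passage $n \to \infty$, since $u_{i,n} \rightharpoonup z_i - x$ and $F_i z_{i,n} \rightharpoonup x$ are being paired against vectors independent of $n$, and one family is constant.

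The main obstacle -- and the reason both ``global'' hypotheses appear in the statement -- is the remaining cross-term $\sum_{i\in I} \pair{u_{i,n}}{F_i z_{i,n}}$, in which both factors converge only weakly. I would resolve it by the decomposition
\begin{equation*}
 \sum_{i\in I} \pair{u_{i,n}}{F_i z_{i,n}} = \pair{\sum_{i\in I} u_{i,n}}{F_1 z_{1,n}} + \sum_{i\in I} \pair{u_{i,n}}{F_i z_{i,n} - F_1 z_{1,n}}.
\end{equation*}
The first piece is a strong--weak inner product by virtue of the hypothesis $\sum_{i\in I}(z_{i,n}-F_i z_{i,n}) \to -mx + \sum_{i\in I} z_i$, so it converges to $\pair{\sum_{i\in I} z_i - mx}{x}$. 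The second piece vanishes because $\{u_{i,n}\}$ is norm-bounded (being weakly convergent) and $F_i z_{i,n} - F_1 z_{1,n} \to 0$ strongly.

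Collecting the limits and simplifying algebraically yields
\begin{equation*}
 \sum_{i\in I} \pair{(z_i - x) - (y_i - F_i y_i)}{x - F_i y_i} \geq 0 \quad \mbox{for all } y_1, \dots, y_m \in H.
\end{equation*}
As $y_i$ ranges over $H$, the pair $(F_i y_i, y_i - F_i y_i)$ exhausts the graph of $A_i$, so the above is exactly the monotonicity certificate needed to conclude, via maximal monotonicity of the product operator $A_1 \times \cdots \times A_m$ on $H^m$, that $z_i - x \in A_i x$, i.e.\ $F_i z_i = x$, for every $i \in I$.
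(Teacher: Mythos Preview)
The paper does not supply a proof of this proposition at all; it is quoted verbatim as \cite[Theorem 2.10]{Bauschke} and used as a black box in the proof of Theorem~\ref{Main1}. So there is no in-paper argument to compare against.

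That said, your argument is correct and is essentially the proof given in Bauschke's original paper. The identification $A_i = F_i^{-1} - I$ with $F_i = (I+A_i)^{-1}$ is the Minty parametrisation, the firm-nonexpansiveness inequality you start from is exactly the monotonicity of $A_i$ written in resolvent coordinates, and your handling of the delicate cross-term $\sum_i \pair{u_{i,n}}{F_i z_{i,n}}$ via the splitting into a strongly-convergent sum against a weakly-convergent factor plus a bounded-times-null remainder is the key step and is carried out cleanly. The final appeal to maximal monotonicity of the product operator on $H^m$ is legitimate because a finite product of maximal monotone operators on a product Hilbert space is again maximal monotone, and the graph of each $A_i$ is indeed swept out by $(F_i y_i,\, y_i - F_i y_i)$ as $y_i$ ranges over $H$.

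One cosmetic point: the second ``global'' hypothesis is already implied by the first (if each $z_{i,n} - F_i z_{i,n} \rightharpoonup z_i - x$, then the sum converges weakly to $\sum_i z_i - mx$; the hypothesis upgrades this to strong convergence). You use exactly this strong convergence in the strong--weak pairing, so the hypothesis is not redundant in your argument, but it may be worth saying explicitly that this is where the norm convergence of the sum enters.
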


The first main result is stated as follows.
\begin{theorem}
\label{Main1}
Let $A$ and $B$ be closed and convex sets in $H$ 
and let $\{y_{n}\}$ be the sequence  generated by (\ref{AAMR3}). 
If $A\cap B\neq\emptyset$ and $q-P_{A\cap B}(q)\in (N_{A}+N_{B})(P_{A\cap B}(q))$,
then $\{y_{n}\}$ weakly converges to $P_{A\cap B}(q)$. 
\end{theorem}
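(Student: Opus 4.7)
The plan is to reduce the weak convergence of $\{y_n\}$ to an application of Proposition~\ref{Prop1}, which is the standard device for dealing with the lack of sequential weak continuity of $P_A$. I would first invoke \cite[Theorem 4.1]{Aragon-Campoy} to obtain the weak limit $x_n\rightharpoonup x^{*}\in\mathrm{Fix}(T_{A-q,B-q,\alpha,\beta})$ together with the identity $P_{A}(x^{*}+q)=P_{A\cap B}(q)$, and combine it with the asymptotic regularity $\Vert x_{n+1}-x_{n}\Vert\to 0$ that accompanies any Krasnosel'ski\u\i--Mann iteration of an $\alpha$-averaged mapping with nonempty fixed-point set. Since $P_{A}$ is nonexpansive and $\{x_{n}\}$ is bounded, $\{y_{n}\}$ is bounded as well, so it suffices to show that every weak subsequential limit $\bar y$ of $\{y_{n}\}$ equals $P_{A\cap B}(q)$.

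Fix a subsequence $y_{n_{k}}\rightharpoonup \bar y$. The key modelling step is to rewrite the two modified reflections as $2\beta P_{A-q}-I=2F_{1}-I$ and $2\beta P_{B-q}-I=2F_{2}-I$ with $F_{1}:=\beta P_{A-q}$ and $F_{2}:=\beta P_{B-q}$, which are firmly nonexpansive because the corresponding $2F_{i}-I$ are already known to be nonexpansive. Put $z_{1,n}:=x_{n}$ and $z_{2,n}:=(2F_{1}-I)(x_{n})=2\beta(y_{n}-q)-x_{n}$. A direct rewriting of (\ref{AAMR3}) gives
\[
 x_{n+1}-x_{n}=2\alpha\bigl(F_{2}(z_{2,n})-F_{1}(z_{1,n})\bigr),
\]
so asymptotic regularity forces $F_{1}(z_{1,n})-F_{2}(z_{2,n})\to 0$ strongly. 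Along the fixed subsequence the weak limits become $z_{1,n_{k}}\rightharpoonup x^{*}$, $z_{2,n_{k}}\rightharpoonup 2\beta(\bar y-q)-x^{*}$, and $F_{i}(z_{i,n_{k}})\rightharpoonup \beta(\bar y-q)$ for both $i=1,2$.

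It remains to verify the strong-convergence hypothesis of Proposition~\ref{Prop1} with $m=2$ and $x:=\beta(\bar y-q)$. A short computation shows that $\sum_{i=1,2}(z_{i,n}-F_{i}(z_{i,n}))$ telescopes to $F_{1}(z_{1,n})-F_{2}(z_{2,n})$, which tends strongly to $0$; and the required target is $-2x+z_{1}+z_{2}=-2\beta(\bar y-q)+x^{*}+2\beta(\bar y-q)-x^{*}=0$, so the hypothesis is satisfied. Proposition~\ref{Prop1} then delivers $F_{1}(x^{*})=\beta(\bar y-q)$, i.e.\ $P_{A}(x^{*}+q)=\bar y$, and combining with the Arag\'on--Campoy identity $P_{A}(x^{*}+q)=P_{A\cap B}(q)$ yields $\bar y=P_{A\cap B}(q)$. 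Since every weak cluster point of the bounded sequence $\{y_{n}\}$ coincides with $P_{A\cap B}(q)$, the whole sequence converges weakly to this limit. I expect the main subtlety to be the choice of the splitting $(F_{1},F_{2})=(\beta P_{A-q},\beta P_{B-q})$: with the more obvious choice $(P_{A-q},P_{B-q})$ the residual sum would only converge weakly, which is too weak to invoke Proposition~\ref{Prop1}.
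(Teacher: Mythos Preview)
Your argument is correct and follows the same overall route as the paper: first invoke \cite[Theorem~4.1]{Aragon-Campoy} to get $x_{n}\rightharpoonup x^{*}$ with $P_{A}(x^{*}+q)=P_{A\cap B}(q)$ and the asymptotic regularity of the Krasnosel'ski\u\i--Mann iteration, then pass to a weakly convergent subsequence of $\{y_{n}\}$ and identify its limit via Bauschke's demiclosedness principle (Proposition~\ref{Prop1}).

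The one substantive difference is your choice of the firmly nonexpansive pair. The paper applies Proposition~\ref{Prop1} with $F_{1}=P_{A-q}$, $F_{2}=P_{B-q}$ and the sequences $z_{1,k}=x_{n_{k}}$, $z_{2,k}=w_{n_{k}}:=2\beta P_{A-q}(x_{n_{k}})-x_{n_{k}}$, while you take the scaled pair $F_{1}=\beta P_{A-q}$, $F_{2}=\beta P_{B-q}$ with the same sequences. Your scaling makes the verification of the strong-convergence hypothesis in Proposition~\ref{Prop1} transparent: the residual sum $\sum_{i}(z_{i,n}-F_{i}(z_{i,n}))$ telescopes exactly to $F_{1}(z_{1,n})-F_{2}(z_{2,n})$, which tends to $0$ in norm, and the target $-2x+z_{1}+z_{2}$ collapses to $0$ as well. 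With the paper's unscaled choice the same sum equals $(2\beta-2)P_{A-q}(x_{n_{k}})+\bigl[P_{A-q}(x_{n_{k}})-P_{B-q}(w_{n_{k}})\bigr]$; the bracket tends to $0$ strongly by the asymptotic regularity, but the first term only converges weakly, and indeed the paper records this step with $\rightharpoonup$ rather than $\rightarrow$. Your closing remark about the ``more obvious'' splitting $(P_{A-q},P_{B-q})$ giving only weak convergence of the residual sum therefore hits exactly the point where the two arguments diverge, and your version is the tidier one on this count.
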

\begin{proof}
Using \cite[Remark 3.2 and Corollary 4.1]{Aragon-Campoy}, we have ${\rm Fix}(R_{A-q,B-q,\beta})\neq\emptyset$. 
Let $u\in {\rm Fix}(R_{A-q,B-q,\beta})$. 
Since $\{x_{n}\}$ can be viewed as the Krasnosel'ski\u\i-Mann 
fixed point iteration with respect to nonexpansive mapping 
$R_{A-q,B-q,\beta}$, by virtue of \cite[Theorem 5.14]{Bauschke-Combettes}, 
we have that, for any $n\in \mathbb{N}$, 
\begin{equation}
\label{Eq2}
\alpha(1-\alpha)\Vert (I-R_{A-q,B-q,\beta})(x_{n})\Vert^{2}\le \Vert x_{n}-u\Vert^{2}
-\Vert x_{n+1}-u\Vert^{2}
\end{equation}
and 
\begin{equation}
\label{Eq3}
\Vert  (I-R_{A-q,B-q,\beta})(x_{n+1})\Vert^{2}\le \Vert  (I-R_{A-q,B-q,\beta})(x_{n})\Vert^{2}. 
\end{equation}
Moreover, 
\begin{equation}
\label{Eq4}
 \Vert (I-R_{A-q,B-q,\beta})(x_{n})\Vert\rightarrow 0~~(n\rightarrow \infty). 
\end{equation}

By \cite[Theorem 4.1]{Aragon-Campoy}, 
\begin{equation}
\label{Eq5}
x_{n}\rightharpoonup x^{*}~~(n\rightarrow\infty), 
\end{equation}
such that  $P_{A}(x^{*}+q)=P_{A\cap B}(q)$. 
Since $P_{A-q}$ is firmly nonexpansive and $\{x_{n}\}$ is bounded, $\{P_{A-q}(x_{n})\}$ 
is bounded. 
Then, there exists a subsequence $\{P_{A-q}(x_{n_{k}})\}$ 
of $\{P_{A-q}(x_{n})\}$ such that $\{P_{A-q}(x_{n_k})\}$ 
weakly converges to some $x\in H$ and hence 
\begin{equation}
\label{Eq6}
P_{A-q}(x_{n_k})\rightharpoonup x~(k\rightarrow\infty). 
\end{equation}

To simplify the notation, define
\[
w_{n}=2\beta P_{A-q}(x_{n})-x_{n},~~ n=0,1,2,\dots.  
\]
From the definition of $R_{A-q,B-q,\beta}$, we have
\begin{align*}
I-R_{A-q,B-q,\beta}&=I-(2\beta P_{B-q}-I)(2\beta P_{A-q}-I)\\
&=I-2\beta P_{B-q}(2\beta P_{A-q}-I)+2\beta P_{A-q}-I\\
&=2\beta(P_{A-q}-P_{B-q}(2\beta P_{A-q}-I)). 
\end{align*}
This together with (\ref{Eq4}) yields
\[
 2\beta \Vert P_{A-q}(x_{n})-P_{B-q}(w_{n})\Vert\rightarrow 0~(n\rightarrow\infty),
\]
and hence 
\begin{equation}
\label{Eq7}
  \Vert P_{A-q}(x_{n})-P_{B-q}(w_{n})\Vert\rightarrow 0~(n\rightarrow\infty). 
\end{equation}
This implies that $\{P_{B-q}(w_{n_k})\}$ weakly converges to $x$ and hence 
\begin{equation}
\label{Eq8}
P_{B-q}(w_{n_k})\rightharpoonup x~(k\rightarrow\infty). 
\end{equation}
Using (\ref{Eq6}) and (\ref{Eq8}), we have 
\begin{equation}
\label{Eq9}
 w_{n_k}\rightharpoonup 2\beta x-x^{*}~~(k\rightarrow\infty), 
\end{equation}
and set $w^{*}=2\beta x-x^{*}$. Using (\ref{Eq5}), (\ref{Eq6}), (\ref{Eq8}) and 
(\ref{Eq9}), we have
\begin{equation}
\label{Eq10} 
x_{n_{k}}-P_{A-q}(x_{n_{k}})+w_{n_{k}}-P_{B-q}(w_{n_{k}})\rightharpoonup -2x+x^{*}+w^{*}~(k\rightarrow\infty). 
\end{equation}
Therefore, the assumptions of Proposition \ref{Prop1} 
are satisfied at this theorem by taking 
\[
 z_{1,k}=x_{n_{k}}, F_{1}(z_{1,k})=P_{A-q}(x_{n_{k}}),
 z_{2,k}=w_{n_{k}}, F_{2}(z_{2,k})=P_{B-q}(w_{n_{k}}),
\]
and we have that 
\[
 P_{A-q}(x^{*})=x.
\]
Since $x$ is an arbitrary weak cluster point of $\{P_{A-q}(x_{n})\}$, 
we conclude that 
\[
P_{A-q}(x_{n})\rightharpoonup  P_{A-q}(x^{*})~(n\rightarrow\infty).
\]
This together with the property of $P_{A}$ yields
\[
P_{A}(x_{n}+q)\rightharpoonup  P_{A}(x^{*}+q)~(n\rightarrow\infty). 
\]
\end{proof}

\begin{remark}
 Since $P_{A}(x^{*}+q)=P_{A\cap B}(q)$ (see \cite[Proposition 3.4]{Aragon-Campoy}), 
(\ref{AAMR3}) generates a sequence weakly converging to the unique solution to
the best approximation problem (\ref{BAP}). 
That is, (\ref{AAMR3}) can directly be applied to solve problem (\ref{BAP}). 
Moreover, we can also show that 
\[
 P_{B}(2\beta P_{A}(x_{n}+q)-x_{n})\rightharpoonup 
P_{A}(x^{*}+q)~(n\rightarrow\infty). 
\]
The proof is much the same as that of Theorem \ref{Main1}.
\end{remark}

\begin{remark}
When $H$ is finite-dimensional, 
$\{x_{n}\}$ strongly converges, and hence 
$\{y_{n}\}$ strongly converges to $P_{A}(x^{*}+q)$. 
Numerical results of (\ref{AAMR3}) were presented 
in \cite[Section 7]{Aragon-Campoy} to demonstrate the 
efficiency in comparison with existing algorithms. 
However, in infinite-dimensional Hilbert space, 
the weak convergence of $\{y_{n}\}$ was not guaranteed because 
$P_{A}$ may fail to be sequentially  weakly continuous \cite{Bauschke,Zarantonello}.
We showed weak convergence of $\{y_{n}\}$, without any other restrictions.
\end{remark}

\section{Convergence rate result}
We next establish the convergence rate of (\ref{AAMR3}). 
To estimate the convergence rate, 
we consider the following residual function 
\begin{equation}
\label{Estimate2}
r(x)=\Vert P_{A}(x+q)-P_{B}(P_{A}(x+q))\Vert.
\end{equation}
Let $\{x_{n}\}$ be 
a sequence generated by (\ref{AAMR1}). Then, from the definition of (\ref{Estimate2}), 
$r$ has the following properties:
\begin{itemize}
\item $r(x)\ge 0~(x\in H)$;
 \item $r(x_{n})=\Vert y_{n}-P_{B}(y_{n})\Vert$;
\item $r(x)=0$ if and only if $P_{A}(x+q)=P_{B}(P_{A}(x+q))\in A\cap B$.
\end{itemize}

The next lemma is useful to our proof of the convergence rate theorem.

\begin{lemma} {\rm \cite[Lemma 1.2]{Deng-Lai-Peng-Yin}}
\label{Lem1}
Let $\{\alpha_{n}\}$ be the sequence in $\mathbb{R}$ such that 
\begin{itemize}
 \item[{\rm (1)}] $\alpha_{n}\ge 0$;
 \item[{\rm (2)}] $\sum_{i=0}^{\infty}\alpha_{i}<\infty$;
 \item[{\rm (3)}] $\{\alpha_{n}\}$ is monotonically non-increasing, 
\end{itemize}
then $\alpha_{n}=o\left(\frac{1}{n}\right)$, where the notation 
$o$ means that $\alpha_{n}=o\left(\frac{1}{n}\right)$ if and only if 
$\lim_{n\rightarrow\infty}\alpha_{n}\cdot n=0$.  
\end{lemma}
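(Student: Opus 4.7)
The plan is to combine the Cauchy criterion for the convergent series $\sum_{i=0}^{\infty}\alpha_i$ with the monotonicity of $\{\alpha_n\}$. The key observation is that, since the sequence is non-increasing, any tail-window sum satisfies $\sum_{i=n+1}^{m}\alpha_i\ge (m-n)\alpha_m$; hence a tail estimate on the series translates directly into a pointwise estimate on $\alpha_m$. The natural choice that produces the $1/n$ rate is the doubling window $m=2n$, which pairs $n$ indices against a single value $\alpha_{2n}$.

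First, I would fix $\varepsilon>0$ and use the Cauchy criterion to produce $N\in\mathbb{N}$ such that $\sum_{i=n+1}^{2n}\alpha_i<\varepsilon$ for all $n\ge N$. Monotonicity (condition (3)) then yields $n\alpha_{2n}\le \sum_{i=n+1}^{2n}\alpha_i<\varepsilon$, so $2n\alpha_{2n}<2\varepsilon$ for all $n\ge N$. This shows $2n\alpha_{2n}\to 0$, settling the claim along the even indices.

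For odd indices, I would use $\alpha_{2n+1}\le \alpha_{2n}$ together with the previous step to estimate
$$(2n+1)\alpha_{2n+1}\le (2n+1)\alpha_{2n}=2n\alpha_{2n}+\alpha_{2n},$$
which tends to zero because $2n\alpha_{2n}\to 0$ and $\alpha_{2n}\to 0$ (the latter being the general-term condition for a convergent non-negative series, which follows from (1) and (2)). Combining the even and odd cases gives $n\alpha_n\to 0$, which is exactly $\alpha_n=o(1/n)$.

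The argument is short, and I do not anticipate a serious obstacle; the only step requiring any care is choosing the doubling window so that $m-n$ grows linearly in $n$ while the window still lies inside a Cauchy tail of the series. It is worth noting that monotonicity is essential here: without condition (3) the Cauchy criterion alone only yields $\alpha_n\to 0$, not $n\alpha_n\to 0$, so the proof must use all three hypotheses simultaneously.
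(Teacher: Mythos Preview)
Your argument is correct: the doubling-window estimate $n\alpha_{2n}\le\sum_{i=n+1}^{2n}\alpha_i$ together with the Cauchy criterion is exactly the standard way to obtain $n\alpha_n\to 0$ from a summable non-increasing non-negative sequence, and your treatment of the odd indices is clean.

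As for comparison with the paper: there is nothing to compare. The paper does not prove this lemma at all; it is stated with a citation to \cite[Lemma~1.2]{Deng-Lai-Peng-Yin} and used as a black box in the proof of Theorem~\ref{Main2}. Your proof therefore supplies what the paper simply imports from the literature.
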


The second main result is stated as follows.

\begin{theorem}
 \label{Main2}
Let $A$ and $B$ be closed and convex sets in $H$ 
and let $\{y_{n}\}$ be the sequence  generated by (\ref{AAMR3}). 
If $A\cap B\neq\emptyset$ and $q-P_{A\cap B}(q)\in (N_{A}+N_{B})(P_{A\cap B}(q))$,
then $r(x_{n})=o\left(\frac{1}{\sqrt{n}}\right)$. 
\end{theorem}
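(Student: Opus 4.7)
The plan is to bound the residual $r(x_n)$ by a constant multiple of the fixed-point residual $\Vert (I-R_{A-q,B-q,\beta})(x_n)\Vert$ and then invoke Lemma~\ref{Lem1} to get an $o(1/\sqrt{n})$ rate on that quantity. Setting $p_n := P_{A-q}(x_n)$ and $w_n := 2\beta p_n - x_n$ throughout, the first step is to pass to shifted coordinates using the identity $P_{C+p}(y) = P_C(y-p) + p$ with $C=A-q$ and $C=B-q$; this rewrites
\[
r(x_n) = \Vert p_n - P_{B-q}(p_n)\Vert = \mbox{\rm dist}(p_n, B-q).
\]

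Next, the algebraic identity established in the proof of Theorem~\ref{Main1} gives $(I - R_{A-q,B-q,\beta})(x_n) = 2\beta(p_n - P_{B-q}(w_n))$. Since $P_{B-q}(w_n)$ is itself a point of $B-q$, the infimum defining $\mbox{\rm dist}(p_n, B-q)$ is dominated by the distance from $p_n$ to this particular element, yielding the key estimate
\[
r(x_n) \;\le\; \Vert p_n - P_{B-q}(w_n)\Vert \;=\; \frac{1}{2\beta}\Vert (I-R_{A-q,B-q,\beta})(x_n)\Vert.
\]

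Finally, the hypotheses imply through \cite[Theorem 4.1]{Aragon-Campoy} that $R_{A-q,B-q,\beta}$ admits a fixed point $u$. Telescoping (\ref{Eq2}) gives $\sum_{n=0}^{\infty}\Vert (I-R_{A-q,B-q,\beta})(x_n)\Vert^{2} \le \Vert x_0-u\Vert^{2}/(\alpha(1-\alpha))<\infty$, while (\ref{Eq3}) supplies the monotonicity needed by Lemma~\ref{Lem1}. Applying that lemma to $\alpha_n := \Vert (I-R_{A-q,B-q,\beta})(x_n)\Vert^{2}$ yields $\alpha_n = o(1/n)$, equivalently $\Vert (I-R_{A-q,B-q,\beta})(x_n)\Vert = o(1/\sqrt{n})$. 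Combining with the key estimate gives $r(x_n) = o(1/\sqrt{n})$.

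I do not anticipate a serious obstacle. The one observation that needs to be noticed, rather than ground out, is that a straightforward triangle-inequality bound $r(x_n) \le \Vert p_n - P_{B-q}(w_n)\Vert + \Vert P_{B-q}(w_n) - P_{B-q}(p_n)\Vert$ followed by nonexpansiveness of $P_{B-q}$ would produce the term $\Vert w_n - p_n\Vert = \Vert (2\beta-1)p_n - x_n\Vert$, which does \emph{not} vanish along the iteration; exploiting instead the membership $P_{B-q}(w_n)\in B-q$ (so that it is an admissible competitor in the infimum defining the distance) removes this obstruction completely.
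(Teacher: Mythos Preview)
Your proof is correct and follows essentially the same route as the paper: bound $r(x_n)$ by $\frac{1}{2\beta}\Vert (I-R_{A-q,B-q,\beta})(x_n)\Vert$ via the observation that the projected point $P_{B-q}(w_n)$ lies in $B-q$ (the paper phrases this as ``by the definition of $P_B$''), then telescope (\ref{Eq2}), use the monotonicity (\ref{Eq3}), and apply Lemma~\ref{Lem1}. The only cosmetic difference is that you work in the shifted coordinates $A-q,\,B-q$ throughout, whereas the paper translates back to $A,\,B$ before making the competitor argument; the content is identical.
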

\begin{proof}
Let $u\in {\rm Fix}(R_{A-q,B-q,\beta})$. 
By (\ref{Eq2}) in the proof of Theorem \ref{Main1}, we have, for any $n\in \mathbb{N}$, 
\[
 \alpha(1-\alpha)\Vert (I-R_{A-q,B-q,\beta})(x_{n})\Vert^{2}\le \Vert x_{n}-u\Vert^{2}
-\Vert x_{n+1}-u\Vert^{2}. 
\]
Summing up from $j = 0$ to $k$, 
\[
\alpha(1-\alpha) \sum_{j=0}^{k}\Vert (I-R_{A-q,B-q,\beta})(x_{j})\Vert^{2}
\le \Vert x_{0}-u\Vert^{2}-\Vert x_{k+1}-u\Vert^{2}\le \Vert x_{0}-u\Vert^{2},
\]
and hence 
\[
\sum_{j=0}^{\infty}\Vert (I-R_{A-q,B-q,\beta})(x_{j})\Vert^{2}<\infty. 
\]
Obviously, $\Vert (I-R_{A-q,B-q,\beta})(x_{n})\Vert^{2}\ge 0$, 
using the above result and (\ref{Eq3}), the assumptions of Lemma \ref{Lem1} 
are satisfied at this theorem by taking
\[
 \alpha_{n}=\Vert (I-R_{A-q,B-q,\beta})(x_{n})\Vert^{2},
\]
and hence 
\[
 \Vert (I-R_{A-q,B-q,\beta})(x_{n})\Vert^{2}=o\left(\frac{1}{n}\right).   
\]
This implies that 
\[
 n\Vert (I-R_{A-q,B-q,\beta})(x_{n})\Vert^{2}\rightarrow 0~(n\rightarrow \infty),   
\]
and hence 
\begin{equation}
\label{Eq11}
 \sqrt{n}\Vert (I-R_{A-q,B-q,\beta})(x_{n})\Vert\rightarrow 0~(n\rightarrow \infty).
\end{equation}
Using $I-R_{A-q,B-q,\beta}=2\beta(P_{A-q}-P_{B-q}(2\beta P_{A-q}-I))$ and 
the property of the metric projection, we have 
\begin{align*}
\Vert (I-R_{A-q,B-q,\beta})(x_{n})\Vert&=
2\beta\Vert (P_{A-q}-P_{B-q}(2\beta P_{A-q}-I))(x_{n})\Vert \\
&=2\beta\Vert P_{A-q}(x_{n})-P_{B-q}(2\beta P_{A-q}(x_{n})-x_{n})\Vert \\
&=2\beta\Vert P_{A}(x_{n}+q)-q-P_{B}(2\beta P_{A}(x_{n}+q)-q-x_{n}+q)+q\Vert \\
&=2\beta\Vert y_{n}-P_{B}(2\beta y_{n}-x_{n})\Vert.
\end{align*}
This together with (\ref{Eq11}) implies that
\[
2\beta \sqrt{n}\Vert y_{n}-P_{B}(2\beta y_{n}-x_{n})\Vert\rightarrow 0~(n\rightarrow \infty). 
\]
By the definition of $P_{B}$, we have 
\[
\Vert y_{n}-P_{B}(y_{n})\Vert\le  \Vert y_{n}-P_{B}(2\beta y_{n}-x_{n})\Vert
\]
and hence 
\[
 2\beta \sqrt{n}\Vert y_{n}-P_{B}(y_{n})\Vert\rightarrow 0~(n\rightarrow \infty). 
\]
We can therefore conclude that 
\[
 r(x_{n})=o\left(\frac{1}{\sqrt{n}}\right). 
\]
\end{proof}

\begin{remark}
The worst-case convergence rates of 
the Krasnosel'ski\u\i-Mann iterations have been analyzed in 
\cite{C-S-V,L-F-P}. 
We estimated that $r(x_{n})$ converges to zero at a rate of 
$o\left(\frac{1}{\sqrt{n}}\right)$. 
On the other hand, 
it is not guaranteed whether the weak cluster points of the 
Krasnosel'ski\u\i-Mann iterations solve the best 
approximation problem. 
We showed that (\ref{AAMR3}) generates a sequence 
weakly converging to the solution to problem (\ref{BAP}). 

\end{remark}

\section{Finite termination result}
In this section, we investigate finite termination of a modification of (\ref{AAMR3}). 
We make the following assumptions.
\begin{assumption}~
\label{FT}
\begin{itemize}
\item[{\rm (A1)}] $B$ is closed and convex cone;
\item[{\rm (A2)}] $A \cap{\rm int }B\neq\emptyset$.
\end{itemize}
\end{assumption}
\begin{remark}
Assumption (A2) implies that ${\rm int}B\neq\emptyset$. 
Using Lemma \ref{Lem2}, for any $e\in {\rm int}B$, 
$A\cap (B+\gamma e)\neq\emptyset$ for sufficiently small $\gamma>0$. 
\end{remark}

We know the following lemma, due to 
Rami, Helmke and Moore \cite{Rami-Helmke-Moore}. 

\begin{lemma}{\rm \cite[Lemma 2.3]{Rami-Helmke-Moore}}
\label{Lem4}
Let $C$ be a closed and convex cone in $H$ such that $\mbox{\rm int}C\neq\emptyset$. 
If $e\in \mbox{\rm int}C$, then it holds
\begin{equation}
\label{R-H-M}
 \mbox{\rm dist}(C+e,(\mbox{\rm int}C)^{c})>0.
\end{equation}
\end{lemma}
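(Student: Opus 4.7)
The idea is to exhibit a single radius $r>0$ such that $B(c+e,r)\subset{\rm int}\,C$ for every $c\in C$; this immediately yields $\mbox{\rm dist}(C+e,({\rm int}\,C)^{c})\ge r>0$. The radius will come from the interior-point assumption on $e$, while its uniformity in $c\in C$ will be supplied by the cone structure of $C$.

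First, since $e\in{\rm int}\,C$ and ${\rm int}\,C$ is open, pick $r>0$ with $B(e,r)\subset{\rm int}\,C$. The key auxiliary fact I would then establish is the cone translation property $C+{\rm int}\,C\subseteq{\rm int}\,C$. The argument is short: given $y\in{\rm int}\,C$ with $B(y,\rho)\subset C$ and any $c\in C$, the translate $c+B(y,\rho)=B(c+y,\rho)$ lies in $C+C\subseteq C$ (because a convex cone is closed under addition), and being a ball it must in fact lie in ${\rm int}\,C$.

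Combining the two ingredients, every $z\in B(c+e,r)$ can be written as $z=c+(e+w)$ with $\|w\|\le r$, so that $e+w\in B(e,r)\subset{\rm int}\,C$ and hence $z\in C+{\rm int}\,C\subseteq{\rm int}\,C$. This shows $B(c+e,r)\subset{\rm int}\,C$ for every $c\in C$; consequently any $y\in({\rm int}\,C)^{c}$ satisfies $\|y-(c+e)\|>r$ for all $c\in C$, and taking the infimum delivers the claim. I expect the only non-routine step to be the cone translation property $C+{\rm int}\,C\subseteq{\rm int}\,C$, which is precisely what makes $r$ work uniformly in $c$. Without the cone hypothesis, translating the interior ball by a large $c$ could push it out of $C$, so this is where the assumption on $C$ enters crucially.
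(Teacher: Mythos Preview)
The paper does not supply its own proof of this lemma; it merely quotes the result from \cite[Lemma~2.3]{Rami-Helmke-Moore} and uses it as a black box in Theorem~\ref{Main3}. There is therefore nothing in the paper to compare your argument against.

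Your argument itself is correct. The key inclusion $C+{\rm int}\,C\subseteq{\rm int}\,C$ is exactly what makes the radius $r$ uniform in $c$, and your justification via $C+C\subseteq C$ (convexity plus positive homogeneity) is the standard one. One cosmetic point: after observing $B(c+y,\rho)\subset C$ you write ``being a ball it must in fact lie in ${\rm int}\,C$''; strictly, only the \emph{open} ball of radius $\rho$ about $c+y$ is guaranteed to sit inside ${\rm int}\,C$, while points on the bounding sphere could lie on $\partial C$. This does not affect the proof, since all you need is $c+y\in{\rm int}\,C$, which the open-ball inclusion already gives. With that tweak the proof is clean and yields ${\rm dist}(C+e,({\rm int}\,C)^{c})\ge r>0$ directly.
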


\begin{remark} An example of $C$ satisfying (\ref{R-H-M}) is $\mathcal{S}^{N}_{+}$. 
\begin{itemize}
\item 
 Since  $\mathcal{S}^{N}_{+}$ is a closed and convex cone and 
$\delta I_{N}\in {\rm int}\mathcal{S}^{N}_{+}(=\mathcal{S}^{N}_{++}$) for $\delta>0$, 
${\rm dist}(\mathcal{S}^{N}_{+}+\delta I_{N},(\mathcal{S}^{N}_{++})^{c})>0$, 
where $I_{N}$ is the $N\times N$ identity matrix and 
$\mathcal{S}^{N}_{++}$ is the set of $N\times N$ symmetric positive definite matrices. 
\item For any $\delta>0$, the lower bound of $\mbox{\rm dist}(\mathcal{S}^{N}_{+}+\delta I_{N},(\mathcal{S}^{N}_{++})^{c})$ 
can be estimated by $\delta$, i.e., 
\[
 \mbox{\rm dist}(\mathcal{S}^{N}_{+}+\delta I_{N},(\mathcal{S}^{N}_{++})^{c})\ge \delta
\]
(see \cite[Section 4]{M-X}). 

\end{itemize}
\end{remark}


Suppose that Assumption \ref{FT}. Let $e\in {\rm int}B$ and $\gamma>0$ such that $A\cap (B+\gamma e)\neq\emptyset$.  
The existence of $e$ and $\gamma$ are guaranteed by (A2) and Lemma \ref{Lem2}. 
We consider the following modification of (\ref{AAMR3}). 
Choose $z_{0}$ and  $\alpha,\beta\in (0,1)$ and  
consider the iterative scheme
\begin{equation}
\label{AAMR4}
\left \{
\begin{array}{l}
w_{n}=P_{A}(z_{n})\\
z_{n+1}=T_{A,B+\gamma e,\alpha,\beta}(z_{n}),~~n=0,1,2,\dots.
\end{array}
\right.
\end{equation}

\begin{remark}
In theorems \ref{Main1} and \ref{Main2}, 
we used the metric projections onto the sets shifted by $-p$ 
satisfying (\ref{CQ1}). 
The condition (\ref{CQ1}) is automatically satisfied when $A\cap{\rm int}B\neq\emptyset$ holds (see \cite{Aragon-Campoy}). 
\end{remark}

The third main result is stated as follows.

\begin{theorem}
\label{Main3}
Suppose that  Assumption \ref{FT} holds. Let  $\{w_{n}\}$ 
be the sequence generated by (\ref{AAMR4}), where $e\in {\rm int}B$ and $\gamma >0$ such that $A\cap(B+\gamma e)\neq\emptyset$. 
Then $\{w_{n}\}$ terminates finitely to some point 
$w\in  A\cap {\rm int}B$.
\end{theorem}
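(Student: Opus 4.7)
The plan is to view the iteration (\ref{AAMR4}) as the standard AAMR recursion (\ref{AAMR3}) applied to the pair of sets $(A,\,B+\gamma e)$ with shift $q=0$, so that the previously established weak convergence and rate results (Theorems \ref{Main1} and \ref{Main2}) yield a quantitative estimate of how close $w_n$ lies to $B+\gamma e$. The remaining work will then be to convert this \emph{closeness} to $B+\gamma e$ into \emph{membership} in ${\rm int}B$, exploiting the positive buffer distance provided by Lemma \ref{Lem4}.

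To invoke Theorem \ref{Main2}, I first verify the constraint qualification for the pair $\{A,\,B+\gamma e\}$ at $q=0$. By Assumption \ref{FT}(A2) combined with Lemma \ref{Lem2}, the parameter $\gamma>0$ may be taken small enough that $A\cap{\rm int}(B+\gamma e)\neq\emptyset$; this standard interior-point condition implies the strong CHIP and hence (\ref{CQ1}) at $q=0$, as discussed just after Lemma \ref{Lem2}. Noting the identifications $y_n \leftrightarrow w_n$ and $x_n \leftrightarrow z_n$, Theorem \ref{Main2} then gives
\[
 \Vert w_n - P_{B+\gamma e}(w_n)\Vert \;=\; o\!\left(\tfrac{1}{\sqrt{n}}\right),
\]
so in particular this residual tends to $0$. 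Next, because $B$ is a closed convex cone and $e\in{\rm int}B$, for any $\gamma>0$ we have $\gamma e\in{\rm int}B$, and Lemma \ref{Lem4} applied with $C=B$ and the interior point $\gamma e$ produces a positive constant
\[
 \delta \;:=\; {\rm dist}(B+\gamma e,\,({\rm int}B)^c) \;>\; 0.
\]

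The finite-termination argument then proceeds by combining these two estimates. I choose $N$ large enough that $\Vert w_n - P_{B+\gamma e}(w_n)\Vert < \delta$ for every $n\ge N$. If some such $w_n$ were to lie outside ${\rm int}B$, i.e., in $({\rm int}B)^c$, then its projection $p_n:=P_{B+\gamma e}(w_n)\in B+\gamma e$ would satisfy ${\rm dist}(p_n,\,({\rm int}B)^c)\le \Vert w_n - p_n\Vert < \delta$, contradicting the definition of $\delta$. Hence $w_n\in{\rm int}B$, and since $w_n=P_A(z_n)\in A$, we conclude $w_n\in A\cap{\rm int}B$ for all $n\ge N$; setting $w:=w_N$ yields the finite-termination conclusion. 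The main obstacle is precisely this bridge from an asymptotic residual bound to a hard membership statement: without the cone structure one has no uniform gap $\delta$ between $B+\gamma e$ and $({\rm int}B)^c$, so convergence of the residual would be compatible with $w_n$ never entering ${\rm int}B$, and it is exactly Lemma \ref{Lem4} that supplies the quantitative safety margin needed to rule this out.
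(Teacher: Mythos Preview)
Your proof is correct and follows essentially the same route as the paper: verify the constraint qualification for the pair $\{A,\,B+\gamma e\}$, obtain the $o(1/\sqrt{n})$ residual rate (you invoke Theorem~\ref{Main2} directly, whereas the paper re-derives the rate for the slightly different quantity $\Vert P_{A}(z_{n})-P_{B+\gamma e}(2\beta P_{A}-I)(z_{n})\Vert$, but both lie in $B+\gamma e$ so the contradiction step is identical), and then use Lemma~\ref{Lem4} to produce the positive gap $\delta$ that forces $w_{n}\in{\rm int}B$ for all large $n$. The only cosmetic difference is that the paper works with $A\cap({\rm int}B)^{c}$ rather than $({\rm int}B)^{c}$, but it immediately bounds the former distance below by the latter anyway.
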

\begin{proof} 
Using the assumption (A2) and Lemma \ref{Lem2}, 
$\{A,B+\gamma e\}$ has the strong CHIP (see, e.g., \cite{Burachik-Jeyakumar,Aragon-Campoy}). 
Using \cite[Remark 3.2 and Theorem 3.1]{Aragon-Campoy}, we have ${\rm Fix}(R_{A,B+\gamma e,\beta})\neq\emptyset$. 
Let $u\in {\rm Fix}(R_{A,B+\gamma e,\beta})$. 
Since $\{z_{n}\}$ can be viewed as the Krasnosel'ski\u\i-Mann 
fixed point iteration with respect to nonexpansive mapping 
$R_{A,B+\gamma e,\beta}$, by virtue of \cite[Theorem 5.14]{Bauschke-Combettes}, 
we have that, for any $n\in \mathbb{N}$, 
\begin{equation}
\label{Eq12}
\alpha(1-\alpha)\Vert (I-R_{A,B+\gamma e,\beta})(z_{n})\Vert^{2}\le \Vert z_{n}-u\Vert^{2}
-\Vert z_{n+1}-u\Vert^{2}.
\end{equation}
By summing up (\ref{Eq12}) from $j=0$ to $k$, 
\[
\alpha(1-\alpha) \sum_{j=0}^{k}\Vert (I-R_{A,B+\gamma e,\beta})(z_{j})\Vert^{2}
\le \Vert z_{0}-u\Vert^{2}-\Vert z_{k+1}-u\Vert^{2}\le \Vert z_{0}-u\Vert^{2}.
\]
Using $I-R_{A,B+\gamma e,\beta}=2\beta(P_{A}-P_{B+\gamma e}(2\beta P_{A}-I))$ and 
the  similar arguments as in the proof of Theorem \ref{Main2}, we can show that 
\begin{equation}
\label{Eq13}
 \Vert (P_{A}-P_{B+\gamma e}(2\beta P_{A}-I))(z_{n})\Vert=o\left(\frac{1}{\sqrt{n}}\right). 
\end{equation}

On the other hand, using Lemma \ref{Lem4}, we have
\begin{align*}
{\rm dist}(B+\gamma e, A\cap ({\rm int }B)^{c})\ge {\rm dist}(B+\gamma e, ({\rm int }B)^{c})
>0.  
\end{align*}
Using (\ref{Eq13}), there exists $l_{0}\in \mathbb{N}$ such that 
\begin{equation}
\label{Eq14}
\Vert  (P_{A}-P_{B+\gamma e}(2\beta P_{A}-I))(z_{l})\Vert
< {\rm dist}(B+\gamma e, A\cap ({\rm int }B)^{c})
\end{equation}
for all $l\ge l_{0}$. 
Let $l\in \mathbb{N}$ with $l\ge l_{0}$. 
If  $w_{l}=P_{A}(z_{l})\notin \mbox{int}B$, then 
$w_{l}\in A\cap (\mbox{int}B)^{c}$.
From the definition of $\mbox{dist}(B+\gamma e,A\cap (\rm{int}B)^{c})$, 
we can see that 
\[
 \Vert P_{A}(z_{l})-P_{B+\gamma e}(2\beta P_{A}-I)(z_{l})\Vert\ge 
\mbox{\rm dist}(B+\gamma e,A\cap(\mbox{\rm int}B)^{c}), 
\]
and this is a contradiction to (\ref{Eq14}). Therefore, $w_{l}\in A\cap \mbox{int}B$ 
for all $l\ge l_{0}$. 
\end{proof}

\begin{remark}
Finite termination  of  projection-type iterative methods 
was established in \cite{Rami-Helmke-Moore,B-D-N-P,M-X}. The techniques used in Theorem \ref{Main3} 
can be found in \cite{Rami-Helmke-Moore,M-X}. 
\end{remark}

\section{Conclusion}
In this paper, we have studied variants of the AAMR method for solving 
the best approximation problem in an infinite-dimensional Hilbert space.
In particular, its theoretical properties such as global weak convergence, an 
$o\left(\frac{1}{\sqrt{n}}\right)$ rate and finite termination  are established. 
Our variant has a few advantages. First, the method 
can directly be applied to solve the best approximation problem. 
Second, it guarantees a convergence rate of $o\left(\frac{1}{\sqrt{n}}\right)$.

Although no numerical results are given here, the behavior of (\ref{AAMR3}) can 
be estimated from the computational experience reported in \cite[Section 7]{Aragon-Campoy}, 
since the method in \cite[Section 7]{Aragon-Campoy} is essentially the same as 
that considered in this paper in the finite-dimensional setting.  

\section*{Acknowledgments}
The author is grateful to Professors W. Takahashi of Tokyo Institute of Technology,  
D. Kuroiwa of Shimane University and Li Xu of Akita Prefectural University for their helpful support.

\end{document}